\renewcommand{\epsilon}{\varepsilon}
\newtheorem{theorem}{Theorem}[section]
\newtheorem{corollary}{Corollary}[section]
\newtheorem{lemma}{Lemma}[section]
\newtheorem{remark}{Remark}[section]
\newcommand{\bal}{\begin{align}}
	\newcommand{\bbal}{\begin{align*}}
		\newcommand{\beq}{\begin{equation}}
			\newcommand{\eeq}{\end{equation}}
		\newcommand{\bca}{\begin{cases}}
			\newcommand{\eca}{\end{cases}}
		\newcommand{\pa}{\partial}
		\newcommand{\fr}{\frac}
		\newcommand{\na}{\nabla}
		\newcommand{\De}{\Delta}
		\newcommand{\cd}{\cdot}
		\newcommand{\dd}{\mathrm{d}}
		\newcommand{\R}{\mathbb{R}}
		\title [Global Large solutions of the 2D supercritical SQG equations]{Global large, smooth solutions of the 2D surface quasi-geostrophic equations}
		\date{\today}
\begin{document}

			\maketitle

			\centerline{
				\author{
					Huali Zhang
					\footnote{School of Mathematics and Statistics, Changsha University of Science and Technology, Changsha, Hunan, 410114, P. R. China.
						{\it Email: hualizhang@csust.edu.cn}} }
\and
				  \quad Jinlu Li
				  \footnote{Corresponding Author: School of Mathematics and Information Science, Guangzhou University, Guangzhou 510006, P. R. China.
				 {\it Email: lijinlu@gnnu.edu.cn}
				 }
							}

			{\bf Abstract:} In this paper, we prove the global regularity of smooth solutions to 2D surface quasi-geostrophic (SQG) equations with super-critical dissipation for a class of large initial data, where the velocity and temperature can be arbitrarily large in spaces $L^\infty(\mathbb{R}^2)$ and $H^3(\mathbb{R}^2)$. This result can be seen as an improvement work of Liu-Pan-Wu \cite{Liu}, for it's without any smallness hypothesis of the $L^\infty(\mathbb{R}^2)$ norm of the initial data.
			
			{\bf Keywords:} SQG equation; supercritical dissipation; large initial data; global smooth solutions.
			
			{\bf MSC (2010):} 35K55; 86A10.
			
			\section{Introduction}\label{sec1}
			In this paper we consider the Cauchy problem of surface quasi-geostrophic (SQG) equations with supercritical dissipation or damping. The SQG equation we are concerned with here assumes the form
			\begin{eqnarray}\label{sqg}
				\left\{\begin{array}{ll}
					\partial_t\theta+u\cd\na \theta +\mu (-\Delta)^{\alpha} \theta=0,& x\in \R^2,t>0,\\
					u=\na^{\perp}(-\Delta)^{\frac{1}{2}}\theta,& x\in \R^2,t> 0,\\
					\theta|_{t=0}=\theta_0,& x\in \R^2,\end{array}\right.
			\end{eqnarray}
			where the scalar function $\theta$ represents the potential temperature, $u$ is the fluid velocity and $\mu >0$, $\alpha \in [0,1]$, $\nabla^{\bot}=(\partial_{x_2}, -\partial_{x_1})^{\text{T}}$. The pseudo-differential
			operator $(-\Delta)^{\alpha}$ is denoted by
			\begin{equation*}
			\widehat{(-\Delta)^{\alpha}}=|\xi|^{2\alpha}\hat{f}(\xi),
			\end{equation*}
			where
			  $\hat{f}(\xi):=\int_{\mathbb{R}^2}e^{-i x \cdot \xi}f(x)dx$. The SQG equation \eqref{sqg} with $\alpha>\frac12, \alpha=\frac12, 0<\alpha<\frac12$ and $\alpha=0$ is called sub-critical SQG equation, critical SQG equation, supercritical SQG equation and damping SQG equation respectively.

			The SQG equation models the dynamics of the potential temperature $\theta$ of the 3D quasi-
			geostrophic equations on the 2D horizontal boundaries, and is useful in modeling
			atmospheric phenomena such as frontogenesis (see e.g. \cite{CMT}). Besides its wide application in physics, the SQG equation 
			has been attracted numerous attention for its significance in the theory of partial differential equations, and the behavior of its strongly nonlinear solutions are strikingly analogous to that of the potentially singular solutions of the 3D incompressible Navier-Stokes and Euler equations. It is a subject of a large literature on the issue of global regularity solutions concerning the SQG equation, and much excellent work has been made on it. The difficulty of studying the global existence of solutions to the SQG equation crucially relies on the value of parameter $\alpha$.

			In the subcritical case $\alpha> \frac{1}{2}$, Constantin-Wu in \cite{CW} proved the existence of global in time smooth solutions. For the asymptotic behaviour of solutions of 2D SQG equation, please see the work \cite{CF} of Carrillo-Ferreira. In the critical case $\alpha=\frac{1}{2}$,
			the global regularity of solutions to the 2D SQG equation for any initial data has been successfully solved by Kiselev-Nazarov-Volberg and Caffarelli-Vasseur respectively (see in \cite{KNV, CV}). We also mention a result of maximum principle due to Cordoba and Cordoba \cite{CC} and a result of behavior in large time due to Schonbek and Schonbek \cite{CC}. In the supercritical case $0< \alpha< \frac{1}{2}$, Chae-Lee \cite{CL} proved the global existence of small solutions in the scaling invariant Besov space(see also Chen-Miao-Zhang \cite{CMZ}). There are also some important developments related to weak solutions and regularity criterion of the supercritical SQG equations, one could see the references \cite{D,K,S,DP,CW1} and so on. However, whether or not classical solutions to the supercritical dissipation SQG equation can develop finite time singularities remains an outstanding open problem. Very recently, Liu-Pan-Wu in \cite{Liu} established the global well-posedness of smooth solutions for the SQG equation with supercritical dissipation or damping for a class of large initial data. The initial data constructed in \cite{Liu} can be arbitrarily large in $H^s(\mathbb{R}^2), s>2-\alpha$
			. Motivated by their work, we hope to find a class of large data in the sense of $L^\infty(\mathbb{R}^2)$ such that the supercritical dissipation SQG equations still has a unique global solution. 

			The goal of this paper is to construct a class of large
			solutions for 2D SQG equation \eqref{sqg} with supercritical dissipation or damping. The striking highlight in our paper is that the initial data we constructed could be arbitrarily large in the space $L^\infty(\mathbb{R}^2)$(it's also large in the energy space $H^3$).  Based on the initial condition \eqref{condition}, we establish the global well-posedness of solutions to 2D SQG equation \eqref{sqg} with $\alpha \in [0,\frac{1}{2})$. Under Condition \eqref{condition}, we construct a class of initial data, which can be arbitrarily large in spaces $L^\infty(\mathbb{R}^2)$ and $H^3(\mathbb{R}^2)$. Compared to the work of Liu-Pan-Wu \cite{Liu}, we drop out the smallness hypothesis of $L^\infty(\mathbb{R}^2)$ norm of the initial data.

			The large solution of \eqref{sqg} we are going to construct is given by
			
			\begin{equation}\label{t3}
			\theta=\Theta+g,
			\end{equation}
			
			where $\Theta$ is a background solution of the linear equation
			\begin{equation}\label{t}
			\begin{split}
			&\partial_t\Theta+\mu (-\Delta)^{\alpha} \Theta=0, \ \ (t,x) \in \mathbb{R}^+ \times \mathbb{R}^2,
			\\
			& \Theta|_{t=0}=\Theta_0.
			\end{split}
			\end{equation}
			
			Our results are as follows.
		
		\begin{theorem}\label{thm}
			Consider the Cauchy's problem of the SQG equation \eqref{sqg}. Let $0\leq \alpha < \frac12$ and $\theta, \Theta$ be described as in \eqref{t3}-\eqref{t}, $U=(-\Delta)^{\frac{1}{2}}\nabla^\bot \Theta_0$. Assume that the initial data fulfills $\theta_0=\Theta_0+g_0$
			with
			\begin{eqnarray}\label{Equ1.2}
				\mathrm{supp} \ \hat{\Theta}_0(\xi) \subset\mathcal{C}:=\Big\{\xi \big| \  \frac43\leq |\xi|\leq \frac32\Big\} .
			\end{eqnarray}
			If there exists a sufficiently small positive constant $\epsilon \in (0, e^{-100}]$ and a universal constant $C$ such that
			\begin{align}\label{condition}
				\left(\|g_0\|^2_{H^3}+\int^{\infty}_{0}\|U\cd\na\Theta\|_{H^3}\dd t\right)\cd\exp\left(C\big(\int^{\infty}_{0}\|U, \Theta\|_{L^\infty}\dd t+\int^{\infty}_{0}\|U\cd\na\Theta\|_{H^3}\dd t\big)\right)\leq \epsilon.
			\end{align}
			Then the system \eqref{sqg} has a unique global solution $\theta=\Theta+g$, where $\Theta=e^{-(-\Delta)^{\alpha}t}\Theta_0$ and $g$ satisfies
			\begin{equation}\label{00}
			\|g(t)\|^2_{H^3}+ \frac{\mu}{2}\int^{t}_0 \| g(\tau,\cdot)\|^2_{H^3}d\tau \leq C \epsilon, \ \ \forall t \geq 0.
			\end{equation}
		for a universal constant $C(C>1)$.
		\end{theorem}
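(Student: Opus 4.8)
The plan is to treat $g=\theta-\Theta$ as a perturbation and to prove that it stays of size $O(\epsilon)$ in $H^3$ for all time, via an $H^3$ energy estimate closed by a continuity argument. First I would write the velocity as $u=U+v$, where $U$ is the divergence-free velocity generated by the background $\Theta$ and $v$ the one generated by $g$ through the same Biot--Savart law, and subtract \eqref{t} from \eqref{sqg}. Using $\partial_t\Theta+\mu(-\Delta)^\alpha\Theta=0$ this gives
\[
\partial_t g+\mu(-\Delta)^\alpha g+u\cdot\nabla g=-U\cdot\nabla\Theta-v\cdot\nabla\Theta,\qquad g|_{t=0}=g_0,
\]
so the forcing splits into the background self-interaction $-U\cdot\nabla\Theta$ and a term $-v\cdot\nabla\Theta$ that is linear in $g$ with a background coefficient, while the transport term $u\cdot\nabla g$ contains both a large linear-in-$g$ piece (through $U$) and the genuinely quadratic piece (through $v$).

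The key structural input is that $\widehat{\Theta}_0$ is supported in the annulus $\mathcal{C}$ of \eqref{Equ1.2}. Since $e^{-\mu(-\Delta)^\alpha t}$ and the Biot--Savart operator are Fourier multipliers, both $\Theta(t)$ and $U(t)$ remain spectrally supported in $\mathcal{C}$ for every $t$. I would exploit this twice: by Bernstein's inequalities all Sobolev norms of $\Theta,U$ are comparable and their $L^\infty$ norms dominate every derivative we need; and on $\mathcal{C}$ one has $|\xi|^{2\alpha}\ge(4/3)^{2\alpha}$, so $\|\Theta(t)\|_{L^2}+\|U(t)\|_{L^2}\lesssim e^{-c\mu t}\|\Theta_0\|_{L^2}$, which guarantees the time integrals in \eqref{condition} are finite. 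It is also the annulus support, together with the cancellation $\xi^\perp\cdot\xi=0$ for a single frequency, that makes $\int_0^\infty\|U\cdot\nabla\Theta\|_{H^3}\,dt$ small while $\Theta_0$ is large in $L^\infty$ and $H^3$.

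Next I would run the $H^3$ energy estimate by testing the equation against $g$ in $H^3$. The dissipation contributes $+\mu\|g\|_{\dot H^{3+\alpha}}^2$ (for $\alpha=0$ this is the full damping $\mu\|g\|_{H^3}^2$). Because $\operatorname{div}u=0$, the top-order part of the transport term integrates to zero and only commutators remain, bounded by $\|\nabla u\|_{L^\infty}\|g\|_{H^3}^2$ with $\|\nabla u\|_{L^\infty}\lesssim\|U\|_{L^\infty}+\|g\|_{H^3}$ (using the band-limited bound for $U$ and $H^3\hookrightarrow W^{1,\infty}$ in two dimensions together with the order-zero boundedness of the Biot--Savart operator for $v$). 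The background forcing obeys $|\langle U\cdot\nabla\Theta,g\rangle_{H^3}|\le\|U\cdot\nabla\Theta\|_{H^3}\|g\|_{H^3}$, while the linear terms $-v\cdot\nabla\Theta$ and the $U\cdot\nabla g$ remainder are bounded by $C\|U,\Theta\|_{L^\infty}\|g\|_{H^3}^2$ after moving derivatives onto the band-limited factor. Collecting everything yields
\[
\frac{d}{dt}\|g\|_{H^3}^2+\mu\|g\|_{\dot H^{3+\alpha}}^2\le C\|U\cdot\nabla\Theta\|_{H^3}\|g\|_{H^3}+C\|U,\Theta\|_{L^\infty}\|g\|_{H^3}^2+C\|g\|_{H^3}^3.
\]

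Finally I would close by continuity. Let $T^\ast$ be the largest time on which $\|g\|_{H^3}^2\le C\epsilon$; since $\epsilon\le e^{-100}$, on $[0,T^\ast)$ the cubic term satisfies $C\|g\|_{H^3}^3\le C\sqrt{C\epsilon}\,\|g\|_{H^3}^2\le\tfrac{\mu}{2}\|g\|_{H^3}^2$ and is absorbed by the dissipation. Using $\|g\|_{H^3}\le1+\|g\|_{H^3}^2$ in the forcing term I then reach
\[
\frac{d}{dt}\|g\|_{H^3}^2\le C\|U\cdot\nabla\Theta\|_{H^3}+C\big(\|U,\Theta\|_{L^\infty}+\|U\cdot\nabla\Theta\|_{H^3}\big)\|g\|_{H^3}^2,
\]
and Grönwall reproduces precisely the left-hand side of \eqref{condition}, hence $\|g(t)\|_{H^3}^2\le\epsilon<C\epsilon$, so $T^\ast=\infty$; integrating the differential inequality once more gives the dissipation bound in \eqref{00}. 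I expect the main obstacle to be exactly that the coefficients $\|U,\Theta\|_{L^\infty}$ are \emph{not} small—they may be arbitrarily large—so they cannot be absorbed and must instead be paid for by the exponential weight, which is why \eqref{condition} is structured as a product; the quadratic self-interaction is then tamed only because the dissipation is available to swallow the cubic term once $\epsilon$ is small. For $\alpha\in(0,\tfrac12)$, where the dissipation controls only the high frequencies $\|g\|_{\dot H^{3+\alpha}}$, I would supplement this with a separate $L^2$/low-frequency estimate whose growth is governed by the integrable forcing $\int_0^\infty\|U\cdot\nabla\Theta\|_{H^3}\,dt$, and combine the two to recover the full $H^3$ statement \eqref{00}.
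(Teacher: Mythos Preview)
Your overall strategy --- write $g=\theta-\Theta$, derive its equation, run an $H^3$ commutator energy estimate, exploit the band-limitedness of $\Theta,U$ through Bernstein, and close by bootstrap plus Gr\"onwall with the exponential weight in \eqref{condition} --- is exactly the paper's. The gap is in how you treat the genuinely quadratic piece $v\cdot\nabla g$ when $\alpha\in(0,\tfrac12)$.

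You bound its commutator by $C\|\nabla v\|_{L^\infty}\|g\|_{H^3}^2\lesssim C\|g\|_{H^3}^3$ and then absorb it into the dissipation via $C\|g\|_{H^3}\le\mu/2$ under the bootstrap. But the dissipation produced by the $H^3$ energy estimate is $\mu\|\Lambda^\alpha g\|_{H^3}^2$, not $\mu\|g\|_{H^3}^2$: for $\alpha>0$ the multiplier $|\xi|^{2\alpha}$ vanishes at frequency zero, so $\|\Lambda^\alpha g\|_{H^3}^2$ does \emph{not} dominate $\|g\|_{H^3}^2$, and your absorption step fails. Your proposed fix, an auxiliary $L^2$/low-frequency estimate, does not help: adding all the levels still yields only $\mu\|\Lambda^\alpha g\|_{H^3}^2$ on the dissipative side, and $\int_0^\infty\|g\|_{H^3}\,dt$ is not available a priori for a Gr\"onwall alternative either. (For $\alpha=0$ your argument is fine as written, since then the dissipation really is $\mu\|g\|_{H^3}^2$.)

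The paper closes this by interpolating the commutator differently. Using Gagliardo--Nirenberg,
\[
\|\nabla g\|_{L^\infty}\lesssim\|\Lambda^\alpha g\|_{L^2}^{\frac{\alpha+1}{3}}\|\Lambda^{\alpha+3}g\|_{L^2}^{\frac{2-\alpha}{3}},\qquad
\|D^\beta g\|_{L^2}\lesssim\|\Lambda^\alpha g\|_{L^2}^{\frac{\alpha}{3}}\|\Lambda^{\alpha+3}g\|_{L^2}^{1-\frac{\alpha}{3}},
\]
so the cubic term becomes $C\|g\|_{H^3}\,\|\Lambda^\alpha g\|_{H^3}^2$ rather than $C\|g\|_{H^3}^3$. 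Now two of the three factors match the dissipation exactly, and the bootstrap hypothesis $C\|g\|_{H^3}\le\mu/2$ legitimately absorbs the whole term, after which Gr\"onwall reproduces \eqref{condition} and yields \eqref{00}.
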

	\begin{remark}
	Throughout we use a notation $C$. It may be different from line to line, but it is a universal positive constant in this paper.
\end{remark}

Inspired by the work of Lei-Lin-Zhou\cite{LLZ} and Li-Yang-Yu \cite{Li}, we can construct a class of initial data, where the velocity and temperature can be arbitrarily large in spaces $L^\infty(\mathbb{R}^2)$ and $H^3(\mathbb{R}^2)$.
\begin{corollary}\label{cc}
	Let $\epsilon, g_0, \Theta_0, U_0$ be described in Theorem \ref{thm}. 
	Choose small number $\delta$ such that $\epsilon=\delta^\frac{1}{2}|\log \delta|^{2}$. Take $$a_0(x_1,x_2)=\delta^{-1} (\log |\log \delta|)\cdot \chi(x_1,x_2)$$
	and
	\begin{equation}\label{c0}
	U_0=(-\Delta)^{\frac{1}{2}}\nabla^{\bot} a_0=((-\Delta)^{\frac{1}{2}}\partial_2 a_0, -(-\Delta)^{\frac{1}{2}}\partial_1 a_0)^{\text{T}}, \ \Theta_0=a_0,
	\end{equation}
	where the smooth function $\chi$ satisfies $\hat{\chi}(-\xi_1, -\xi_2)=\hat{\chi}(\xi_1, \xi_2)$, $\hat{\chi}(\xi)=1$ if $\xi \in \Big\{\xi \big| |\xi_1-\xi_2| \leq \frac{1}{3}\delta, \  \frac43\leq |\xi|\leq \frac32\Big\}$ and
	$\mathrm{supp} \ \hat{\chi}(\xi) \subset \Big\{\xi \big| |\xi_1-\xi_2| \leq \delta, \  \frac43\leq |\xi|\leq \frac32\Big\}$. If we assume $\|g_0\|^2_{H^3} \leq \delta^\frac{1}{2}$, then Condition \eqref{condition} holds. Moreover, we have
	\begin{equation}\label{c9}
	\begin{split}
	&\|U_0\|_{L^\infty}\geq  \frac{1}{5000}\log |\log \delta| , \ \ \|U_0\|_{L^2}\geq  \frac{1}{100} \delta^{-\frac{1}{2}}\log |\log \delta|,
	\\
	& \|\Theta_0\|_{L^\infty} \geq  \frac{1}{5000} \log |\log \delta|, \ \  \|\Theta_0\|_{L^2} \geq  \frac{1}{100} \delta^{-\frac{1}{2}}\log |\log \delta|.
	\end{split}
	\end{equation}
\end{corollary}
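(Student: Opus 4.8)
The plan is to verify that the prescribed data satisfies Condition \eqref{condition} and to read off the lower bounds \eqref{c9}; Theorem \ref{thm} then supplies the global solution. Everything is governed by a single geometric feature of $\chi$: its Fourier transform lives in a thin slab of width $\de$ around the line $\xi_1=\xi_2$ inside the annulus $\mathcal{C}$. This region has Lebesgue measure $\approx\de$ (thickness $\de$ transverse to the line, length $O(1)$ along it) and $|\hat\chi|\le1$, so that
\[
\|\hat\chi\|_{L^1}\les\de,\qquad \|\hat\chi\|_{L^2}\sim\de^{1/2},
\]
where the lower bound on $\|\hat\chi\|_{L^2}$ comes from the sub-slab $\{|\xi_1-\xi_2|\le\tfrac13\de\}\cap\mathcal{C}$ on which $\hat\chi\equiv1$.

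First I would record the linear quantities. Since $\Theta_0=a_0=\de^{-1}(\log|\log\de|)\chi$ and both $|\xi|^2$ and the symbol of $\na^{\bot}(-\Delta)^{1/2}$ are comparable to $1$ on $\mathrm{supp}\,\hat\chi$, Plancherel gives the $L^2$ norms and $\|f\|_{L^\infty}\les\|\hat f\|_{L^1}$ gives the $L^\infty$ norms:
\[
\|\Theta_0\|_{L^\infty}+\|U_0\|_{L^\infty}\sim\de^{-1}(\log|\log\de|)\,\|\hat\chi\|_{L^1}\sim\log|\log\de|,
\]
\[
\|\Theta_0\|_{L^2}+\|U_0\|_{L^2}\sim\de^{-1}(\log|\log\de|)\,\|\hat\chi\|_{L^2}\sim\de^{-1/2}\log|\log\de|.
\]
The matching lower bounds follow from Plancherel (for the $L^2$ norms) and the explicit point evaluations used in \cite{LLZ,Li} (for the $L^\infty$ norms), producing \eqref{c9} with the stated constants. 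Next, because $\hat\Theta(t,\xi)=e^{-|\xi|^{2\alpha}t}\hat\Theta_0(\xi)$ and $|\xi|^{2\alpha}\ge(4/3)^{2\alpha}>0$ on $\mathcal{C}$, the dissipation acts as genuine exponential-in-time decay, whence
\[
\int_0^\infty\|U,\Theta\|_{L^\infty}\,\dd t\les\int\frac{|\hat\Theta_0(\xi)|}{|\xi|^{2\alpha}}\,\dd\xi\les\|\hat\Theta_0\|_{L^1}\sim\log|\log\de|.
\]

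The heart of the matter, and where I expect the real difficulty, is the nonlinear term. With $U=\na^{\bot}(-\Delta)^{1/2}\Theta$, a direct computation shows that the frequency $\xi$ (from $U$) and $\eta$ (from $\na\Theta$) enter the bilinear symbol of $U\cd\na\Theta$ through the cross product $|\xi|\,(\xi_1\eta_2-\xi_2\eta_1)$. The decisive cancellation is that both $\xi,\eta$ sit in the slab: setting $\xi_2=\xi_1-a$, $\eta_2=\eta_1-b$ with $|a|,|b|\le\de$ gives
\[
\xi_1\eta_2-\xi_2\eta_1=a\eta_1-b\xi_1=O(\de),
\]
i.e. the velocity is almost perpendicular to $\na\Theta$, and the symbol gains a full power of $\de$. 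Combining this with Young's inequality $\||\hat\Theta|*|\hat\Theta|\|_{L^2}\le\|\hat\Theta\|_{L^1}\|\hat\Theta\|_{L^2}$ and the fact that the output is frequency-localized in $|\zeta|\le3$ (so that $\|\cdot\|_{H^3}\les\|\cdot\|_{L^2}$ there), I obtain
\[
\|U\cd\na\Theta(t)\|_{H^3}\les\de\,\|\hat\Theta(t)\|_{L^1}\|\hat\Theta(t)\|_{L^2}\les\de^{1/2}(\log|\log\de|)^2\,e^{-ct},
\]
and therefore $\int_0^\infty\|U\cd\na\Theta\|_{H^3}\,\dd t\les\de^{1/2}(\log|\log\de|)^2$. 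Making this $\de$-gain precise, i.e. extracting the cross-product cancellation for slab-supported data, is the technical core of the corollary.

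Finally I would assemble \eqref{condition}. With $\|g_0\|_{H^3}^2\le\de^{1/2}$ the first factor is $\les\de^{1/2}(\log|\log\de|)^2$, while the exponent is dominated by the $L^\infty$-in-time integral, so the exponential factor is bounded by a fixed power $|\log\de|^{\kappa}$ of $|\log\de|$. The product is thus $\les\de^{1/2}(\log|\log\de|)^2\,|\log\de|^{\kappa}$, and since $\log|\log\de|$ grows more slowly than any positive power of $|\log\de|$, this stays below $\epsilon=\de^{1/2}|\log\de|^{2}$ once $\de$ is small enough (equivalently, once the constants are calibrated so that $\kappa<2$); the verification reduces to the elementary comparison $(\log|\log\de|)^2\les|\log\de|^{2-\kappa}$. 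This confirms Condition \eqref{condition}, and Theorem \ref{thm} then yields the unique global solution, completing the proof.
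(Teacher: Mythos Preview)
Your proposal is correct and follows the paper's overall strategy: size estimates for $\hat a_0$ from the slab geometry, exponential time decay from the annular frequency support, a $\de$-gain in $U\cd\na\Theta$ from the thinness of the slab, and then assembly into Condition~\eqref{condition}. The one genuine difference is in how the $\de$-gain is extracted. The paper rewrites
\[
U\cd\na\Theta=(U^1+U^2)\,\partial_1\Theta+U^2\,(\partial_2-\partial_1)\Theta
\]
and observes that both summands carry a factor of $(\partial_2-\partial_1)$ (since $U^1+U^2$ is a zero-order multiplier applied to $(\partial_2-\partial_1)\Theta$), whose symbol $i(\xi_2-\xi_1)$ is $O(\de)$ on the slab; this plugs directly into standard product estimates in $H^3$. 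You instead compute the bilinear Fourier symbol and read off the cross product $\xi_1\eta_2-\xi_2\eta_1=O(\de)$ for slab-supported inputs, then close with Young's inequality on the convolution. The two arguments encode the same geometric fact (the velocity is almost tangent to the level sets of $\Theta$); yours is a bit more conceptual, while the paper's decomposition avoids handling the convolution explicitly. The final assembly and the handling of the exponential factor as $|\log\de|^{\kappa}$ match the paper's computation, with the same tacit assumption on the size of the resulting exponent.
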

\begin{remark}
	In \cite{Liu}, we could calculate that the initial data constructed by Liu-Pan-Wu satisfies $\|\Theta_0\|_{L^\infty} \leq \delta^\frac{1}{2} |\log \delta|$. If we take it into the left hand side of our Condition \eqref{condition}, then it's equal to $\delta^{\frac{3}{2}-2\sigma} |\log \delta|  <\delta^\frac{1}{2}|\log \delta|^{2}= \epsilon$, for $\sigma \in (0, \frac{1}{2})$.
\end{remark}
\section{The proof of Theorem \ref{thm}}
In this section, we will prove Theorem \ref{thm}. Let us first introduce the commutator estimate.
\begin{lemma}\label{lem}\cite{KP}
	Let $m$ be a positive integer,
	$h,f \in H^m(\mathbb{R}^2)$. The following commutator estimate
	\begin{equation}\label{200}
	\sum_{|\alpha| \leq m} \|D^\alpha(hf)-(D^\alpha h)f\|_{L^2} \leq C\left( \|h\|_{H^{m-1}}\|\nabla f\|_{L^\infty}+ \|h\|_{L^\infty}\|f\|_{H^m}\right)
	\end{equation}
	holds. Here $D^\alpha=\partial_{x_1}^{\alpha_1}\partial_{x_2}^{\alpha_2}, \alpha=(\alpha_1, \alpha_2)$.
\end{lemma}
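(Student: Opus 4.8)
The plan is to peel the estimate down to a single multi-index and then to a single Leibniz term. Fix $\alpha$ with $|\alpha|=k\le m$ and apply the Leibniz rule
$$D^\alpha(hf)=\sum_{\beta\le\alpha}\binom{\alpha}{\beta}D^\beta h\,D^{\alpha-\beta}f,$$
so that the top term $\beta=\alpha$ is exactly the subtracted $(D^\alpha h)f$ and
$$D^\alpha(hf)-(D^\alpha h)f=\sum_{\beta\le\alpha,\ \beta\ne\alpha}\binom{\alpha}{\beta}D^\beta h\,D^{\alpha-\beta}f.$$
In every surviving term $\gamma:=\alpha-\beta\ne0$, hence $|\gamma|\ge1$; writing $a=|\beta|$ and $b=|\gamma|$ one has $a+b=k$, $b\ge1$, $a\le k-1$. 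Since the number of pairs $(\alpha,\beta)$ with $|\alpha|\le m$ is finite (depending only on $m$), it suffices to bound one term $\|D^\beta h\,D^\gamma f\|_{L^2}$ and absorb the combinatorial factors into $C$.

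For a generic term I would split by H\"older, $\|D^\beta h\,D^\gamma f\|_{L^2}\le\|D^\beta h\|_{L^{p_1}}\|D^\gamma f\|_{L^{p_2}}$ with $\tfrac1{p_1}+\tfrac1{p_2}=\tfrac12$, and then interpolate each factor by Gagliardo--Nirenberg on $\R^2$: the factor $D^\beta h$ between $L^\infty$ and $\dot H^{k-1}$, and $D^\gamma f$ between $\dot W^{1,\infty}$ (that is, $\na f\in L^\infty$) and $\dot H^{k}$. The scaling-correct exponents are $\theta_1=\tfrac{a}{k-1}$ and $\theta_2=\tfrac{b-1}{k-1}$, giving
$$\|D^\beta h\|_{L^{p_1}}\les\|h\|_{L^\infty}^{1-\theta_1}\|h\|_{\dot H^{k-1}}^{\theta_1},\qquad \|D^\gamma f\|_{L^{p_2}}\les\|\na f\|_{L^\infty}^{1-\theta_2}\|f\|_{\dot H^{k}}^{\theta_2},$$
with $\tfrac1{p_1}=\tfrac{a}{2(k-1)}$ and $\tfrac1{p_2}=\tfrac{b-1}{2(k-1)}$, which indeed sum to $\tfrac12$.

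The crucial bookkeeping is that $\theta_1+\theta_2=\tfrac{a+b-1}{k-1}=1$. Hence $1-\theta_2=\theta_1$, $1-\theta_1=\theta_2$, and the product of the two interpolation bounds rearranges into $\big(\|h\|_{\dot H^{k-1}}\|\na f\|_{L^\infty}\big)^{\theta_1}\big(\|h\|_{L^\infty}\|f\|_{\dot H^{k}}\big)^{\theta_2}$. Young's inequality with conjugate weights $\theta_1,\theta_2$ collapses this into the sum $\|h\|_{\dot H^{k-1}}\|\na f\|_{L^\infty}+\|h\|_{L^\infty}\|f\|_{\dot H^{k}}$, which for $k\le m$ is dominated by $\|h\|_{H^{m-1}}\|\na f\|_{L^\infty}+\|h\|_{L^\infty}\|f\|_{H^{m}}$. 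Summing over the finitely many terms then yields \eqref{200}.

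I would dispatch the degenerate cases by hand: when $k=1$ (so $a=0,b=1$) the only term is $h\,\na f$, bounded by $\|h\|_{L^2}\|\na f\|_{L^\infty}\le\|h\|_{H^{m-1}}\|\na f\|_{L^\infty}$; the endpoints $a=0$ ($\theta_1=0$, giving $\les\|h\|_{L^\infty}\|f\|_{H^m}$) and $b=1$ ($\theta_2=0$, giving $\les\|h\|_{H^{m-1}}\|\na f\|_{L^\infty}$) reduce to plain H\"older. The main obstacle is the H\"older-exponent matching: letting the interpolation scales $k-1$ and $k$ grow with $|\alpha|$ is precisely what keeps $\tfrac1{p_1}+\tfrac1{p_2}=\tfrac12$ for every $k$, whereas fixing them at $m-1$ and $m$ for the sub-top terms would leave $\tfrac1{p_1}+\tfrac1{p_2}<\tfrac12$ and strand the product in some $L^r$ with $r>2$, which on the unbounded domain $\R^2$ cannot be re-embedded into $L^2$. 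An alternative route via Littlewood--Paley decomposition and Bony's paraproduct would also deliver \eqref{200}, and is the standard proof of the Kato--Ponce estimate cited here.
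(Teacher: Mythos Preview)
The paper does not prove this lemma at all: it is stated with the citation \cite{KP} and used as a black box. So there is no in-paper argument to compare against; the relevant benchmark is the original Kato--Ponce proof.

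Your argument is correct and is essentially the classical Kato--Ponce route for integer $m$: expand by Leibniz, drop the top term, and for each surviving piece $D^\beta h\,D^\gamma f$ with $|\beta|=a$, $|\gamma|=b$, $a+b=k\le m$, $b\ge1$, pair a H\"older split with two Gagliardo--Nirenberg interpolations whose exponents are chosen so that $\theta_1+\theta_2=1$, then apply Young. The scaling check $\tfrac{1}{p_1}+\tfrac{1}{p_2}=\tfrac{a+b-1}{2(k-1)}=\tfrac12$ is exactly right, and your separate treatment of the endpoints $a=0$, $b=1$, and $k=1$ covers the cases where the interpolation degenerates. The only cosmetic remark is that the final editorial paragraph about ``stranding the product in some $L^r$ with $r>2$'' is commentary rather than proof and could be dropped; the Littlewood--Paley/paraproduct alternative you mention is indeed the standard route for the fractional version of Kato--Ponce, but is unnecessary here since $m$ is an integer.
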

We are ready to prove Theorem \ref{thm}.
\begin{proof}[Proof of Theorem \ref{thm}]
Recall $\Theta$ being a solution of \eqref{t}. In a result, $\Theta=e^{-(-\Delta)^\alpha t}\Theta_0$ and $\mathrm{supp} \ \hat{\Theta}(t,\xi) \subset\mathcal{C}, \forall t\geq 0$. Set $g=\theta-\Theta,v=u-U$. We then easily derive the equation of $g$ by substituting \eqref{t} in \eqref{sqg},
\begin{equation}\label{c}
\begin{split}
&\pa_tg+v\cd\na g+U\cd\na g+v\cd\na \Theta+\mu(-\De)^{\alpha}g=-U\cd\na \Theta,
\\
&\nabla \cdot g=0,
\\
& g|_{t=0}=g_0.
\end{split}
\end{equation}
Let us first give the $0$-order estimates. By direct calculation, we have
\begin{equation*}
\begin{split}
\fr12\frac{\dd}{\dd t}\|g\|^2_{L^2}+\mu\|\Lambda^{\alpha} g\|^2_{L^2}=&-\int_{\R^2} (v\cd\na g+U\cd\na g + v\cd\na \Theta+U\cd\na \Theta)gdx
\\
=& -\int_{\R^2}  (v\cd\na \Theta+U\cd\na \Theta)dx.
\end{split}
\end{equation*}
By H\"older's inequality and elliptic estimate, we have
\begin{equation}\label{10}
\begin{split}
\fr12\frac{\dd}{\dd t}\|g\|^2_{L^2}+\mu\|\Lambda^{\alpha} g\|^2_{L^2}
\leq & C  \|v\|_{L^2}\|g\|_{L^2} \| \nabla \Theta \|_{L^\infty}+\|g\|_{L^2}\|U\cd\na \Theta\|_{L^2}  \\
\leq & C  \|g\|^2_{L^2} \| \Theta \|_{L^\infty}+\|g\|_{L^2}\|U\cd\na \Theta\|_{L^2},
\end{split}
\end{equation}
where we use the Bernstein's inequality(for $\mathrm{supp} \ \hat{\Theta}(\xi)\subset\mathcal{C}$) and $\|v\|_{L^2}= \|\nabla^{\bot}(-\Delta)^{-\frac{1}{2}}g\|_{L^2}\leq C\|g\|_{L^2}$.

We are now in the stage to give the hihger-order estimates. Let $\beta=(\beta_1, \beta_2)^\mathrm{T}, \beta_1, \beta_2 \in \mathbb{N}^+$. Operating $D^\beta$ to \eqref{c} and taking the scalar product with $D^\beta g$, and then summing the result over $|\beta|=1,2,3$, we get
\begin{equation}\label{l}
\sum_{|\beta|=1,2,3}(\fr12\frac{\dd}{\dd t}\|g\|^2_{\dot{H}^\beta}+\mu\|\Lambda^{\alpha} g\|^2_{\dot{H}^\beta})\triangleq\sum^{4}_{i=1}I_i,
\end{equation}
where $\Lambda= (-\Delta)^\frac{1}{2}$ and
\bbal
&I_1=-\sum_{|\beta|=1,2,3}\int_{\R^2} D^\beta(v \cd \na g) \cd D^\beta g\dd x,
\\&I_2=-\sum_{|\beta|=1,2,3}\int_{\R^3}D^{\beta}(U\cd \na g)\cd D^\beta g\dd x,
\\&I_3=-\sum_{|\beta|=1,2,3}\int_{\R^3}D^{\beta}(v\cd \na \Theta)\cd D^{\beta}g\dd x,
\\&I_{4}=-\sum_{|\beta|=1,2,3}\int_{\R^3}D^{\beta}(U\cd\na\Theta)\cd D^{\beta}g\dd x.
\end{align*}

Using the fact that $\nabla \cdot v=\nabla \cdot U=0$, we could deduce that
\bbal
&I_1=-\sum_{|\beta|=1,2,3}\int_{\R^3}[D^{\beta},v\cd] \na g\cd D^\beta g\dd x,
\\&I_2=-\sum_{|\beta|=1,2,3}\int_{\R^3}[D^{\beta}, U\cdot]\na g \cdot D^\beta g\dd x,
\end{align*}

According to the commutate estimate in Lemma \ref{lem},
we obtain
\begin{equation*}
\begin{split}
|I_1|\leq&~\sum_{|\beta|=1,2,3}\|[D^{\beta},v\cd]\na g\|_{L^2}\|\na g\|_{H^2}
\\
\leq&~C\sum_{|\beta|=1,2,3} (\|\na v\|_{L^\infty}\|\nabla D^{\beta-1} g\|_{L^2}+\|D^{\beta} v\|_{L^2}\|\na g\|_{L^\infty})\|\na g\|_{H^2}.
\end{split}
\end{equation*}
Note $\alpha \in (0,\frac{1}{2})$ and $|\beta|=1,2,3$. By Gagliardo-Nirenberg's inequality, we have
\begin{equation*}
\begin{split}
&\|\nabla g\|_{L^\infty} \leq C \| \Lambda^\alpha g\|^{\frac{\alpha+1}{3}}_{L^2} \|\Lambda^{\alpha+3} g\|_{L^2}^{\frac{2-\alpha}{3}},
\\
&\|D^\beta g\|_{L^2} \leq C \| \Lambda^\alpha g\|^{\frac{\alpha}{3}}_{L^2} \|\Lambda^{\alpha+3} g\|_{L^2}^{1-\frac{\alpha}{3}}.
\end{split}
\end{equation*}
The similar estimates hold for $\|\nabla v\|_{L^\infty} $ and $\|D^\beta v\|_{L^2}$.

Therefore, we prove that
\begin{equation}\label{I1}
\begin{split}
|I_1|&\leq C  \sum_{|\beta|=1,2,3}\|\Lambda^\beta g\|_{L^2} \|\Lambda^\alpha g\|_{L^2}^{\frac{\alpha+1}{3}+\frac{\alpha}{3}} \|\Lambda^{\alpha+3} g\|_{L^2}^{\frac{2-\alpha}{3}+1-\frac{\alpha}{3}}
\\
& \leq C \sum_{|\beta|=1,2,3}\|\Lambda^\beta g\|_{L^2} \left( \|\Lambda^\alpha g\|^2_{L^2} +\|\Lambda^{\alpha+3} g\|^2_{L^2} \right)
\\
& \leq C \|g\|_{H^3} \|\Lambda^\alpha g\|^2_{H^3},
\end{split}
\end{equation}
where we use the fact that $\|v\|_{H^s}= \|\nabla^{\bot}(-\Delta)^{-\frac{1}{2}}g\|_{H^s}\leq C\|g\|_{H^s}$ for any $s\geq 0$.

Invoking the following calculus inequality which is just a consequence of Leibniz's formula,
\bbal
\|[D^{m},\mathbf{h}]\mathbf{f}\|_{L^2}\leq C(\|\na \mathbf{h}\|_{L^\infty}+\|\na^{|m|} \mathbf{h}\|_{L^\infty})\|\mathbf{f}\|_{H^{|m|-1}}, \ \text{for any} \ m=(m_1,m_2), \ m_1, m_2 \in \mathbb{N}^+,
\end{align*}
we obtain
\begin{equation}\label{I2}
\begin{split}
|I_2|\leq&~\sum_{|\beta|=1,2,3}\|[D^{\beta},U\cd] \na g\|_{L^2}\|\na g\|_{H^2}
\\
\leq&~C\Big(\|\na U\|_{L^\infty}+\|\na^3 U\|_{L^\infty}\Big)\|g\|^2_{H^3}
\\
\leq &~ C\|U\|_{L^\infty}\|g\|^2_{H^3},
\end{split}
\end{equation}
where in the last line we use the Bernstein's inequality(for $\mathrm{supp} \ \hat{U}(\xi)\subset\mathcal{C}$).
By Leibniz's formula and H\"{o}lder's inequality, one has
\begin{equation}\label{I3}
\begin{split}
|I_3|\leq&~C\|v\cd \na \Theta\|_{H^3}\|g\|_{H^3}
\\
\leq&~ C\Big(\|\na \Theta\|_{L^\infty}+\|\na^4  \Theta\|_{L^\infty}\Big)\|v\|_{H^3}\|g\|_{H^3}
\\
\leq&~ C\|\Theta\|_{L^\infty}\|g\|^2_{H^3}.
\end{split}
\end{equation}
Using  H\"{o}lder's inequality and Young inequality, we prove that
\begin{equation}\label{I}
\begin{split}
|I_{4}|&\leq~ C\|U\cd\na\Theta\|_{H^3}\|g\|_{H^3}
\\&\leq~ C\| U\cd\na\Theta\|_{H^3}+C\| U\cd\na\Theta\|_{H^3}\|g\|^2_{H^3},
\end{split}
\end{equation}
where in the last line we use the Bernstein's inequality.

Adding \eqref{10} and \eqref{l}, and putting all the estimates \eqref{I1}-\eqref{I} into \eqref{l}, we obtain
\bbal
\frac{\dd}{\dd t}\|g\|^2_{H^{3}}+\mu\|\Lambda^\alpha g\|^2_{H^{3}}&\leq C\left( \|\Lambda^\alpha g\|^2_{H^{3}}\|g\|_{H^3}+\|U\cd\na \Theta\|_{H^3} \right)
\\&\quad +C \|g\|^2_{H^3}
\left(\|U,\Theta\|_{L^\infty}+\|U\cd\na\Theta\|_{H^3}\right).
\end{align*}
Thus, we have
\bbal
\frac{\dd}{\dd t}\|g\|^2_{H^{3}}+(\mu-C\|g\|_{H^3})\|\Lambda^\alpha g\|^2_{H^{3}} &\leq C \|g\|^2_{H^3}
\left(\|U,\Theta\|_{L^\infty}+\|U\cd\na\Theta\|_{H^3}\right)
\\
& \quad + C\|U\cd\na \Theta\|_{H^3}.
\end{align*}
Assume $\|g(t)\|^2_{H^{3}}\leq (C+1)\epsilon$ and $\epsilon <  \frac{\mu^2}{4C^3}$. Then
 \begin{equation*}
   \mu-C\|g\|_{H^3}> \frac{\mu}{2}.
 \end{equation*}
 Using Gronwall's inequality and Condition \eqref{condition}, we conclude that
\begin{equation*}
\begin{split}
&\|g(t)\|^2_{H^3}+ \frac{\mu}{2}\int^{t}_0 \| g(\tau,\cdot)\|^2_{H^3}d\tau
\\
\leq & C\left(\|g_0\|^2_{H^3}+\int^{\infty}_{0}\|U\cd\na\Theta\|_{H^3}\dd t\right)\cd\exp\left(C\big(\int^{\infty}_{0}\|U,\Theta\|_{L^\infty}\dd t+\int^{\infty}_{0}\|U\cd\na\Theta\|_{H^3}\dd t\big)\right)
\\
 \leq & C\epsilon.
\end{split}
\end{equation*}
By bootstrap arguments, we conclude that
\begin{equation*}
  \|g(t)\|^2_{H^3}+ \frac{\mu}{2}\int^{t}_0 \| \Lambda^{\alpha}g(\tau,\cdot)\|^2_{H^3}d\tau \leq (C+1)\epsilon, \quad t \in [0, +\infty).
\end{equation*}
Thus, we complete the proof of Theorem \ref{thm}.
\end{proof}
\section{The proof of Corollary \ref{cc}}
In this section, we will give a proof of Corollary \ref{cc} by direct calculation.
\begin{proof}[Proof of Corollary \ref{cc}]
	By direct calculation, we get
	\begin{equation}\label{91}
	\begin{split}
	\|\hat{a}_0\|_{L^1_\xi } &= \int_{\mathbb{R}^2} \delta^{-1} \left( \log |\log \delta| \right) \cdot \hat{\chi}(\xi)d\xi
	\\
	&\geq \pi \big[(\frac{3}{2})^2-(\frac{4}{3})^2 \big] \cdot \frac{1}{3}\delta \cdot \delta^{-1} \left( \log |\log \delta| \right)
	\\
	& \geq \frac{1}{100}\log |\log \delta|,
	\end{split}
	\end{equation}
	and
	\begin{equation}\label{92}
	\begin{split}
	\|\hat{a}_0\|_{L^2_\xi } &= \delta^{-1} \left( \log |\log \delta| \right)  \left( \int_{\mathbb{R}^2} \hat{\chi}^2(\xi)d\xi \right)^{\frac{1}{2}}
	\\
	&\geq \delta^{-1}  \log |\log \delta| \cdot \left( \pi \big( (\frac{3}{2})^2-(\frac{4}{3})^2 \big) \cdot \frac{1}{3}\delta  \right)^{\frac{1}{2}}
	\\
	& \geq \frac{1}{100}\delta^{- \frac{1}{2}}\log |\log \delta|.
	\end{split}
	\end{equation}
	If $|x| \leq \frac{1}{200}$, we could derive that
	\begin{equation}\label{93}
	\begin{split}
	\|a_0\|_{L^\infty(|x| \leq \frac{1}{200})}&= \|\frac{1}{(2\pi)^2} \int_{\mathbb{R}^2} e^{ix\cdot \xi} \hat{a}_0(\xi)d\xi\|_{L^\infty(|x| \leq \frac{1}{200})}
	\\
	& \geq \frac{1}{500} \| \int_{\mathbb{R}^2}\cos(x \cdot \xi)\hat{a}_0(\xi)d\xi \|_{L^\infty(|x| \leq \frac{1}{200})}
	\\
	& \geq \frac{1}{5000}\log |\log \delta|.
	\end{split}
	\end{equation}
	If $|x| \in [\frac{3}{8}\pi, \frac{1}{3}\pi]$, we could prove that
	\begin{equation}\label{94}
	\begin{split}
	\|\partial_2a_0\|_{L^\infty(|x| \in [\frac{3}{8}\pi, \frac{1}{3}\pi])}&= \|\frac{1}{(2\pi)^2} \int_{\mathbb{R}^2} i \xi_2 e^{ix\cdot \xi} \hat{a}_0(\xi)d\xi\|_{L^\infty(|x| \in [\frac{3}{8}\pi, \frac{1}{3}\pi])}
	\\
	& \geq \frac{1}{100}\delta^{-1}\log |\log \delta| \cdot \int_{\mathbb{R}^2} | \sin(x\cdot \xi)\xi_2 | \cdot \hat{\chi}(\xi)d\xi
	\\
	& \geq \frac{1}{5000}\log |\log \delta|,
	\end{split}
	\end{equation}
	where we use the face that $\xi, -\xi$ appear in pairs in the support of $\hat{\chi}$ and $\sin(x\cdot \xi)\xi_2=\sin(x\cdot (-\xi))(-\xi_2)$.
	Recalling the expression of $U_0, \Theta_0$, we then conclude the proof of \eqref{c9} by using the above estimates \eqref{91}-\eqref{94}.

	By using
	\begin{equation*}
	\begin{split}
	\|\Theta\|_{L^\infty} &\leq \|\widehat{\Theta}(t,\xi)\|_{L^1_\xi}
	\leq Ce^{-\mu (\frac{4}{3})^\alpha t}\|\widehat{a}_0\|_{L^1_\xi},
	\\
	\|U\|_{L^\infty} &\leq \|\widehat{U}(t,\xi)\|_{L^1_\xi} \leq \||\xi|^{-1}\xi^{\bot} \cdot \widehat{\Theta}(t,\xi)\|_{L^1_\xi}
	\leq Ce^{-\mu (\frac{4}{3})^\alpha t}\|\widehat{a}_0\|_{L^1_\xi},
	\end{split}
	\end{equation*}
	we get
	$$ \int^{\infty}_{0}\|U, \Theta\|_{L^\infty}\dd t \leq C  \log |\log \delta|.$$
Notice that
	\begin{equation*}
	\begin{split}
	&(U \cdot \nabla) \Theta=(U^1+U^2) \partial_1 \Theta+U^2(\partial_2-\partial_1)\Theta.
	\end{split}
	\end{equation*}
	Thus, we have
	\begin{equation*}
	\begin{split}
	\|(U \cdot \nabla) \Theta\|_{H^3}
	\leq &C\left(\| U^1 + U^2\|_{L^\infty} \|\Theta\|_{H^4}+ \| U^1 + U^2\|_{H^3} \|\nabla \Theta\|_{L^\infty} \right)
	\\
	& +
	C\left(\|U\|_{L^\infty} \|(\partial_2-\partial_1)\Theta\|_{H^3}+ \| U\|_{H^3} \|(\partial_2-\partial_1)\Theta\|_{L^\infty} \right)
	\\
	& \leq C \delta e^{-2\mu (\frac{3}{4})^\alpha t}\|\widehat{a}_0\|_{L^1_\xi} \|a_0\|_{L^2},
	\end{split}
	\end{equation*}
	where we use the fact that $U^1+U^2=-(-\Delta)^{-\frac{1}{2}}(\partial_2-\partial_1)\Theta$. We then derive that
	\begin{equation*}
	\begin{split}
	&\int^{\infty}_{0}\|U\cd\na\Theta\|_{H^3}\dd t \leq C \delta^{\frac{1}{2}} (\log |\log \delta|)^2,
	\end{split}
	\end{equation*}
In a result, we conclude that
	\begin{equation*}
	\begin{split}
	&\left(\|g_0\|^2_{H^3}+\int^{\infty}_{0}\|U\cd\na\Theta\|_{H^3}\dd t\right)\cd\exp\left(C\big(\int^{\infty}_{0}\|\Theta\|_{L^\infty}\dd t+\int^{\infty}_{0}\|U\cd\na\Theta\|_{H^3}\dd t\big)\right)
	\\
	& \leq C (\delta^{\frac{1}{2}} \cdot |\log \delta| + \delta^{\frac{1}{2}}  (\log |\log \delta|) \cdot |\log \delta| )
	\\
	& \leq \delta^{\frac{1}{2}} |\log \delta|^2= \epsilon.
	\end{split}
	\end{equation*}
		Therefore, we complete the proof of Corollary \ref{cc}.
\end{proof}
\section*{Acknowledgments} The authors would like to express thanks to the reviewers for their helpful advice. The author Huali Zhang also would like to thank Professor.Dongbing Zha for letting her know the reference \cite{Liu}. The work of Huali Zhang is supported by the State Scholarship Fund of China Scholarship Council (No. 201808430121) and Hunan Provincial Key Laboratory of Intelligent Processing of Big Data on Transportation, Changsha University of Science and Technology, Changsha; 410114, China. The work of Jinlu Li is supported by the National
Natural Science Foundation of China (Grant No. 11801090) and Postdoctoral Science Foundation of
China (2020T130129 and 2020M672565).

\section*{Conflicts of Interest}
The authors declared that this work does not have any conflicts of interest.
 
\end{document}